\numberwithin{equation}{section}
\theoremstyle{plain}
\newtheorem{thm}{Theorem}
\newtheorem{lem}[thm]{Lemma}
\theoremstyle{definition}
\theoremstyle{remark}
\newtheorem{rem}[thm]{Remark}
\newcommand{\sL}{\mathcal{L}}
\newcommand{\sD}{\mathcal{D}}
\newcommand{\Id}{I}
\newcommand{\CC}{\mathbb C}
\newcommand{\NN}{\mathbb N}
\newcommand{\RR}{\mathbb R}
\newcommand{\e}{{\rm e}}
\newcommand{\ds}{\,ds}
\renewcommand{\l}{\left}
\renewcommand{\r}{\right}
\begin{document}
\parindent0cm

\title[Differential equations for stochastic network processes]
{Differential equation approximations of stochastic network processes: an operator semigroup approach}
\author[A.~B\'atkai]{Andr\'as B\'atkai}
\author[I.~Z.~Kiss]{Istvan Z.~Kiss}
\author[E.~Sikolya]{Eszter Sikolya}
\author[P.~L.~Simon]{P\'eter L.~Simon}

\address{E\"otv\"os Lor\'and University Budapest, Hungary}
\address{School of Mathematical and Physical Sciences, Department of Mathematics, University of
Sussex, Falmer, Brighton BN1 9RF, UK}
\email{batka@cs.elte.hu, seszter@cs.elte.hu, simonp@cs.elte.hu}
\email{i.z.kiss@sussex.ac.uk}
\date{\today}
\subjclass{}%
\keywords{dynamic network, birth-and-death process, one-parameter operator semigroup}%

\begin{abstract}

The rigorous linking of exact stochastic models to mean-field
approximations is studied. Starting from the differential equation
point of view the stochastic model is identified by its Kolmogorov
equations, which is a system of linear ODEs that depends on the state
space size ($N$) and can be written as $\dot u_N=A_N u_N$. Our
results rely on the convergence of the transition matrices $A_N$
to an operator $A$. This convergence also implies that the
solutions $u_N$ converge to the solution $u$ of $\dot u=Au$. The
limiting ODE can be easily used to derive simpler mean-field-type
models such that the moments of the stochastic process will
converge uniformly to the solution of appropriately chosen
mean-field equations. A bi-product of this method is the proof
that the rate of convergence is $\mathcal{O}(1/N)$. In addition,
it turns out that the proof holds for cases that are slightly more
general than the usual density dependent one. Moreover, for Markov
chains where the transition rates satisfy some sign conditions, a
new approach for proving convergence to the mean-field limit is
proposed. The starting point in this case is the derivation of a
countable system of ordinary differential equations for all the
moments. This is followed by the proof of a perturbation theorem
for this infinite system, which in turn leads to an estimate for
the difference between the moments and the corresponding
quantities derived from the solution of the mean-field ODE.
\end{abstract}
\maketitle

\section{Introduction}

\label{sec:int}

\noindent General birth-and-death models are at the basis of many real-world applications ranging
from queuing theory to disease transmission models, see Grimmett and Stirzaker \cite{GS:01}. In
particular, the analysis of such models involves the consideration of Kolmogorov equations that
simply describe the evolution of the probability of a certain process being in a given state at a
given time. One of the major drawbacks of this approach is the large number of equations. This is
very limiting from an analysis viewpoint, and in addition, it also precludes the construction of a
numerical solution of the full or complete set of equations. Using techniques such as lumping in
Simon et al. \cite{STK:11} this can be circumvented and an equivalent exact system with a
tractable number of equations can be derived. However, often this technique only works in the
presence of significant system symmetries such as in the case of a simple disease transmission
model on a fully connected graph where all nodes are topologically identical. This requirement
rarely holds and highlights the importance of approaches that deal with the complexity of the
large number of equations. Progress in this direction has been made as illustrated by the
important contributions of Kurtz \cite{EK:05}, Bobrowski \cite{Bob:05} and Darling and Norris
\cite{DANo:08}.

Here we take a dynamical system type approach, where the
Kolmogorov equations are simply considered as a system of linear
ODEs with a transition rate matrix having specific properties such
as special tri-diagonal structure and/or well defined functional
form for the transmission rates. For example, consider a Markov
chain with finite state space $\{ 0, 1, \ldots ,N\}$ and denote by
$p_k(t)$ the probability that the system is in state $k$ at time
$t$ (with a given initial state that is not specified at the
moment). Assuming that starting from state $k$ the system can move
to either state $k-1$ or to state $k+1$, the Kolmogorov equations
of the Markov chain take the form
\begin{equation}
    \dot{p}_k=\beta_{k-1}p_{k-1}-\alpha_{k}p_k+\delta_{k+1}p_{k+1},\quad k=0,\ldots, N,
    \tag{KE} \label{eq:Kolmogorov}
\end{equation}
or, introducing the tri-diagonal matrix
$$A_N:=\left(
  \begin{array}{cccccc}
    -\alpha_0 & \delta_1 & 0 & \cdots & \cdots & 0 \\
    \beta_0 & -\alpha_1 & \delta_2 & \cdots & \cdots & 0 \\
    0 & \beta_1 & -\alpha_2 & \delta_3 & \cdots & 0\\
    \vdots & \vdots & \ddots & \ddots & \ddots & \vdots\\
    0 & 0 &  \cdots &\beta_{N-2} & -\alpha_{N-1} & \delta_N\\
    0 & 0 & \cdots & 0 & \beta_{N-1} & -\alpha_N
  \end{array}
\right)$$ and the coloumn vector $p(t)=(p_0(t), p_1(t),
\ldots, p_N(t))^T$,
\begin{equation*}
\dot{p}(t)=A_Np(t).
\end{equation*}
The assumption on the tri-diagonality of the matrix can obviously
be weakened, however, most practical problems that motivate our
work  fall into this class. Therefore, to be in line with
applications and to make our results more transparent, the
tri-diagonal case will be considered.

Here we assume that the coefficients $\beta_k$ and $\delta_k$ are \emph{asymptotically
density dependent} in the sense that
\begin{equation}\label{eq:betakdeltak}
    \beta_k= B_N(k), \quad \delta_k= D_N(k),
\end{equation}
and the following limits exist for all $x\in [0,1],$
\begin{equation}\label{eq:BNDN}
    \beta(x)=\lim_{N\to\infty}\frac{B_N(Nx)}{N}, \quad
    \delta(x)=\lim_{N\to\infty}\frac{D_N(Nx)}{N},
\end{equation}
with $\beta$ and $\delta$ (at least) continuous functions, and
\begin{equation}\label{eq:alfakbetakdeltak}
    \alpha_k=\beta_k+\delta_k,\quad k=0,\ldots, N.
\end{equation}
We note that the usually used density dependence means that $ \beta(x)=\frac{B_N(Nx)}{N}$ for all
$N$ (similarly for $D_N$).

In order to get a differential equation approximation of the Markov chain for $N\to \infty$, the
random variables $X_N(t)$, forming a continuous time Markov process, with values in
\begin{equation*}
\l\{0,\frac{1}{N},\frac{2}{N},\dots ,1 \r\}
\end{equation*}
are considered. Then
\begin{equation*}
p_k(t)=P\l(X_N(t)=\frac{k}{N}\r)
\end{equation*}
can be expressed in terms of the transition probabilities as
\begin{equation*}
p_k(t)=\sum_{j=0}^N p_{j,k}(t)p_j(0),
\end{equation*}
where the transition probabilities are
\begin{equation*}
p_{j,k}(t)=P\l(X_N(t)=\frac{k}{N}|\, X_N(0)=\frac{j}{N}\r).
\end{equation*}
Combining this with the Kolmogorov Equation \eqref{eq:Kolmogorov},
it is straightforward to show that the transition matrix is given
by
\begin{equation*}
T_N(t):=\l[p_{j,k}(t)\r] = \e^{A_N^{\top} t}.
\end{equation*}

As a simple calculation shows in Subsection \ref{subsection3c},
the approximating mean-field differential equation to the Markov
chain can be written as
\begin{equation}\label{eq:apprDE}
    \dot x= \beta(x)-\delta(x) .
\end{equation}

The problem of rigorously linking exact stochastic models to
mean-field approximations goes back to the early work of Kurtz
\cite{Ku:69,Ku:70}. Kurtz studied pure-jump density dependent
Markov processes where apart from providing a method for the
derivation of the mean-field model he also used solid mathematical
arguments to prove the stochastic convergence of the exact to the
mean-field model. His earlier results \cite{Ku:69,Ku:70} relied on
Trotter type approximation theorems for operator semigroups. Later
on, the results were embedded in the more general context of
Martingale Theory \cite{EK:05}. A detailed survey of the subject
from the probabilistic point of view is by Darling and Norris
\cite{DANo:08}. Building on this and similar work, McVinish and
Pollett \cite{McVP:11} have recently extended the differential
equation approximation to the case of heterogeneous density
dependent Markov chains, where the coefficients of the transition
rates may vary with the nodes. The results by Kurtz and others in
this area have been cited and extensively used by modelers in
areas such as ecology and epidemiology to justify the validity of
heuristically formulated mean-field models. The existence of
several approximation models, often based on different modelling
intuitions and approaches, has recently highlighted the need to
try and unify these and test their performance against the exact
stochastic models, see House and Keeling \cite{HK:10}. Some steps
in these directions have been made by Ball and Neal, and Lindquist
el al. \cite{BN:08, LMDW:10}, where the authors clearly state the link
between exact and mean-field models.

The probabilistic approach given by Ethier and Kurtz, and Darling and Norris ~in
\cite{EK:05,DANo:08} yields weak or stochastic convergence of the Markov chain to the solution of
the differential equation. Here we have a more moderate goal, namely to prove that the expected
value of the Markov process converges to the solution of the differential equation as
$N\to\infty$, and to prove that the discrepancy between the two is of order $1/N$. The benefit of
framing the question in this simpler or different way lies in a less technical and more accessible
proof. This will mainly rely on well-known results from semigroup theory compared to the
combination of highly specialist tools and results from probability theory. For the applied semigroup methods see also B\'atkai, Csom\'os and Nickel \cite{BCsN:09} and references therein.
This simpler approach, based on the expected value of the Markov chain, is also motivated by
practical considerations, namely that usually the goodness of the approximation is tested by
comparing the average of many individual simulations to the output from the simplified approximate
model. This can be a satisfactory and sufficient comparison since, according to our knowledge,
weak and stochastic converges is rarely tested.

The main result of the paper can be formulated in the following Theorem, where we assume uniformity for the convergence in \eqref{eq:BNDN}.

\begin{thm}
Let
$$
y_1(t)=\sum_{k=0}^N\frac{k}{N} p_k(t)
$$
be the expected value and let $x$ be the solution of (\ref{eq:apprDE}) with initial condition
$x(0)=y_1(0)$. Let us assume that the limits in \eqref{eq:BNDN} are uniform in the sense that
there exists a number $L$, such that for all $x\in [0,1]$
\begin{equation}\label{eq:limBN}
    |\beta(x)-\frac{B_N(Nx)}{N}| \leq \frac{L}{N} .
\end{equation}
Then for any $t_0>0$ there exists a constant $C$, such that
$$\l|x(t)-y_1(t)\r|\leq\frac{C}{N},\; t\in[0,t_0].$$
\end{thm}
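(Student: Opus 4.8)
The plan is to pass to the observable (backward) semigroup, where the generator acts transparently, to identify its limit as the transport operator attached to \eqref{eq:apprDE}, and to extract the rate $1/N$ from a variation-of-constants comparison combined with a consistency estimate coming from \eqref{eq:limBN}. \emph{Reduction.} With $g=\l(0,\tfrac1N,\dots,1\r)^{\top}$ and $p(t)=\e^{A_Nt}p(0)$ one has $y_1(t)=\l\langle g,p(t)\r\rangle=\l\langle \e^{A_N^{\top}t}g,p(0)\r\rangle=\l\langle T_N(t)g,p(0)\r\rangle$. By \eqref{eq:alfakbetakdeltak} the matrix $A_N^{\top}$ has vanishing row sums and nonnegative off-diagonal entries, so $T_N(t)=\e^{A_N^{\top}t}$ is a stochastic matrix with $\l\|T_N(t)\r\|_{\ell^{\infty}\to\ell^{\infty}}\le1$, and $\l(A_N^{\top}f\r)_k=\beta_k\l(f_{k+1}-f_k\r)+\delta_k\l(f_{k-1}-f_k\r)$. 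For a deterministic initial state $p(0)=e_{j_0}$ (so $y_1(0)=j_0/N$) the claim becomes $\l|\l(T_N(t)g\r)_{j_0}-x(t)\r|\le C/N$ on $[0,t_0]$, which will follow from a uniform estimate on $T_N(t)g$.

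\emph{The limit operator and the comparison.} On $C[0,1]$ set $(Af)(x)=\l(\beta(x)-\delta(x)\r)f'(x)$ with domain $C^{1}[0,1]$; it generates the transport semigroup $\l(S(t)f\r)(x)=f\l(\phi_t(x)\r)$, where $\phi_t$ is the flow of \eqref{eq:apprDE}, which maps $[0,1]$ into itself because $\delta(0)=0=\beta(1)$. With the sampling maps $P_N\colon C[0,1]\to\RR^{N+1}$, $\l(P_Nf\r)_k=f(k/N)$, and the identity $\iota(x)=x$, we have $P_N\iota=g$ and $x(t)=\phi_t(j_0/N)=\l(P_NS(t)\iota\r)_{j_0}$, so we must bound $T_N(t)P_N\iota-P_NS(t)\iota$. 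For $f\in C^{2}[0,1]$ the Duhamel identity gives
\[
T_N(t)P_Nf-P_NS(t)f=\int_0^{t}T_N(t-s)\l(A_N^{\top}P_N-P_NA\r)S(s)f\ds ,
\]
so, using $\l\|T_N\r\|\le1$, one has $\l\|T_N(t)P_Nf-P_NS(t)f\r\|_{\infty}\le\int_0^{t}\l\|\l(A_N^{\top}P_N-P_NA\r)S(s)f\r\|_{\infty}\ds$. A Taylor expansion at $k/N$ gives, for $h\in C^{2}[0,1]$, $\l(A_N^{\top}P_Nh\r)_k=\tfrac{\beta_k-\delta_k}{N}h'\l(\tfrac kN\r)+O\!\l(\tfrac{\beta_k+\delta_k}{N^{2}}\l\|h''\r\|_{\infty}\r)$; since $\beta_k/N=B_N(k)/N=\beta(k/N)+O(1/N)$ by \eqref{eq:limBN} (and likewise $\delta_k/N$), and $\beta,\delta$ are bounded, this equals $\l(P_NAh\r)_k+O(1/N)$ uniformly in $k$. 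Taking $h=S(s)\iota=\phi_s(\cdot)$ and using $\sup_{0\le s\le t_0}\l\|\phi_s\r\|_{C^{2}[0,1]}<\infty$ yields $\l\|T_N(t)g-P_NS(t)\iota\r\|_{\infty}\le C/N$ on $[0,t_0]$; reading off the $j_0$-th coordinate gives $\l|y_1(t)-x(t)\r|\le C/N$. (For a random $p(0)$ one pairs the uniform bound with $p(0)$, which replaces $x(t)$ by $\sum_k\phi_t(k/N)p_k(0)$; the residual Jensen gap is $O(\mathrm{Var}\,X_N(0))$, hence harmless exactly when the initial law is concentrated.)

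\emph{The main obstacle.} The one delicate point is the consistency estimate: it needs the comparison function $\phi_s(\cdot)$ to be uniformly $C^{2}$ (or at least $C^{1,1}$) on $[0,t_0]$, which is not literally implied by the stated hypotheses (continuity of $\beta,\delta$ plus \eqref{eq:limBN}). This is handled by imposing the mild extra regularity $\beta,\delta\in C^{2}[0,1]$, so that the variational equations bound $\l\|\phi_s\r\|_{C^{2}}$ uniformly on compact time intervals; or, more economically, by running the estimate with a first-order Taylor expansion and the modulus of continuity of $\phi_s'$, which still gives $O(1/N)$ once $\beta-\delta$ is Lipschitz. Everything else — stochasticity and contractivity of $T_N$, the Duhamel identity, forward invariance of $[0,1]$ — is routine. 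One can also reach the same bound without semigroups: differentiating $y_1$ along \eqref{eq:Kolmogorov} gives $\dot y_1=\tfrac1N\sum_k(\beta_k-\delta_k)p_k=\mathbb{E}\l[\beta(X_N)-\delta(X_N)\r]+O(1/N)$; a second-order Taylor expansion about $y_1$ rewrites this as $\dot y_1=\beta(y_1)-\delta(y_1)+O(1/N)+O(\mathrm{Var}\,X_N)$; a companion Gronwall bound (from the analogous equation for $\mathbb{E}[X_N^{2}]$, starting from $\mathrm{Var}\,X_N(0)=0$) gives $\mathrm{Var}\,X_N(t)=O(1/N)$ on $[0,t_0]$; and a final Gronwall estimate on $y_1-x$, using that $\beta-\delta$ is Lipschitz, closes the argument.
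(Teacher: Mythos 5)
Your proposal is correct and follows essentially the same route as the paper: the backward semigroup $T_N(t)=\e^{A_N^{\top}t}$ as a contraction, the transport semigroup $(T(t)f)(x_0)=f(\varphi(t,x_0))$ on $C([0,1])$, the Duhamel/variation-of-parameters comparison, the consistency estimate splitting into the $L/N$ density-dependence term and the $\|f''\|/N$ Taylor term, the uniform $C^2$ bound on the flow, and the final evaluation at $f=\mathrm{id}$ with a deterministic initial state. You also correctly identify the same extra regularity hypothesis ($\beta,\delta\in C^2[0,1]$) that the paper itself imposes "for the sake of simplicity" at the start of Section 4, so no genuine gap remains.
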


Since the essence of the Theorem is well-known we highlight the
\textbf{novelties of our approach in the paper}.

\begin{itemize}
\item The proof is self-contained in the sense that no general, abstract, theorem or combination of theorems are used. The result is
based on the simple fact that if the operators $A_N$ converge to the operator $A$ in a certain
sense as $N\to \infty$, then the operator semigroup $T_N$ generated by $A_N$ converges to the
semigroup $T$ generated by $A$. This result is formulated in Lemma \ref{lem:Kurtz}.

\item The proof automatically implies the rate of convergence, namely it can be shown that the difference between the
expected value and the solution of the mean-field ODE is of order $1/N$.

\item Our tools make it possible to extend the above convergence results from density dependent Markov chains to the more general case of
asymptotically density dependent Markov chains.

\item For Markov chains where the transition rates satisfy some specific sign conditions, a completely new approach is
presented. This is based on deriving a countable system of
ordinary differential equations for the moments of a distribution
of interest and proving a perturbation theorem for this infinite
system.

\end{itemize}

The paper is structured as follows. In Section 2 we motivate our
work via two examples and/or applications: an adaptive network
model with random link activation and deletion, and a $SIS$ type
epidemic model on a static graph. The derivation of ODEs for the
moments is presented in Section 3 together with the heuristic
construction of the mean-filed equation for the first moment. In
Section 4, we present our new approach and use it to prove Theorem
1. In Section 5, we present the derivation of an infinite system
of ODEs for the moments (only in the density dependent case) and
we show how this leads to a new approach that can be used to
derive estimates for all moments, directly from the ODE. This is
contrast with the usual approach where only the expected value of
the Markov chain is estimated.

\section{Motivation}\label{sec:mot}

In this Section we present two important examples that motivate
our investigations.

Recently, it has become more and more important to understand the
relation between the dynamics on a network and the dynamics of the
network, see Gross and Blasius \cite{GB:08}. In the case of
epidemic propagation on networks it is straightforward to assume
that the propagation of the epidemic has an effect on the
structure of the network. For example, susceptible individuals try
to cut their links in order to minimize their exposure to
infection. This leads to a change in network structure which in
turn impacts on how the epidemic spreads. The first step in
modeling this phenomenon is an appropriate dynamic network model
such as the recently proposed globally-constrained Random Link
Activation-Deletion (RLAD) model. This can be described in terms
of Kolmogorov equations as follows,
\begin{align*}
\frac{p_k(t)}{dt}&=\alpha(N-(k-1))\l(1-\frac{k-1}{K_1^{max}}\r) p_{k-1}(t)\\
&-\l[\alpha (N-k)\l(1-\frac{k}{K_1^{max}}\r)+\omega k\r] p_k(t)+\omega (k+1) p_{k+1}(t),\\
k&=0,\ldots, N,
\end{align*}
where $p_k(t)$ denotes the probability that at time $t$ there are
$k$ activated links in the network, and $N$ is the total number of
potential edges. It is assumed that non-active links are activated
independently at random at rate $\alpha$ and that existing links
are broken independently at random at rate $\omega$. Furthermore,
the link creation is globally constrained by introducing a
carrying capacity $K_{1}^{max}$, that is the network can only
support a certain number of edges as given by $K_{1}^{max}$.

Using the above notation, here
\begin{align*}
    \beta_{k}&=\alpha(N-k)\l(1-\frac{k}{K_1^{\max}}\r),\quad \delta_{k}=\omega k, \quad \alpha_k=\beta_{k}+\delta_{k}, \quad k=0,\ldots, N,\\
    \alpha_{-1}&=\delta_{N+1}=0
\end{align*}
with $K_1^{\max}$ being of order $N$. These coefficients clearly
satisfy \eqref{eq:BNDN} and \eqref{eq:alfakbetakdeltak}.
\\

The second motivation comes from epidemiology where a paradigm
disease transmission model is the simple susceptible-infected-susceptible ($SIS$) model on a completely
connected graph with $N$ nodes, i.e.~all individuals are connected
to each other. From the disease dynamic viewpoint, each individual
is either susceptible ($S$) or infected ($I$) -- the susceptible
ones can be infected at a certain rate ($\beta$) if linked to at
least one infected individual and the infected ones can recover at
a given rate ($\gamma$) and become susceptible again. It is known
that in this case the $2^N$-dimensional system of Kolmogorov
equations can be lumped to a $N+1$-dimensional system, see Simon,
Taylor and Kiss \cite{STK:11}.

The lumped Kolmogorov equations take again the form (\ref{eq:Kolmogorov}) with
\begin{align*}
    \beta_{k}&=\beta k(N-k)/N,\quad \delta_{k}=\gamma k, \quad \alpha_k=\beta_{k}+\delta_{k}, \quad k=0,\ldots, N,\\
    \beta_{-1}&=\delta_{N+1}=0.
\end{align*}
These coefficients also satisfy \eqref{eq:BNDN}  and \eqref{eq:alfakbetakdeltak}. We note that in
the case of a homogeneous random graph we get a similar system with a slightly different meaning
of the coefficients.

\section{Momentum approach}\label{sec:momentum}

The basic idea of getting an approximating differential equation
is to calculate the time derivative of the expected value by using
the Kolmogorov equations. Since the obtained equation is not
self-contained, it needs to be closed by using some closure
approximation. In this Section we derive first equations for the
derivatives of every moment, then we briefly show how to get the
simplest mean-field approximation for the first moment. This is
also discussed in the case of asymptotically density dependent
Markov chains.

\subsection{Differential equations for the moments} \label{subsection3a}

Introducing the moments
\begin{equation}
    y_n(t)=\sum_{k=0}^N\l(\frac{k}{N}\r)^np_k(t),\quad n=1,2,\ldots,
    \label{eq:momentum}
\end{equation}
($y_1(t)$ is the expected value we are mainly interested in) we follow Simon and Kiss \cite{KS:10} to derive the
differential equations for $y_n(t)$ starting from the Kolmogorov equations \eqref{eq:Kolmogorov}.
To get the time derivative of $y_n$ the following Lemma will be used.

\begin{lem}
Let $r_k$ ($k=0,1,2,\ldots $) be a sequence and let $r(t)=\sum_{k=0} ^N  r_k p_k(t)$, where $p_k(t)$ is given by \eqref{eq:Kolmogorov}. Then
$$
\dot r = \sum_{k=0} ^N  (\beta_k(r_{k+1}-r_k) + \delta_k(r_{k-1}-r_k)) p_k .
$$
 \label{prop1}
\end{lem}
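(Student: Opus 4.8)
The plan is to differentiate $r$ directly and then rearrange the resulting finite sum until every term is a multiple of some $p_k$. Since the sum defining $r(t)=\sum_{k=0}^N r_k p_k(t)$ has finitely many terms, I would start from
$$
\dot r=\sum_{k=0}^N r_k\dot p_k
$$
and substitute the Kolmogorov equations \eqref{eq:Kolmogorov}, obtaining
$$
\dot r=\sum_{k=0}^N r_k\l(\beta_{k-1}p_{k-1}-\alpha_k p_k+\delta_{k+1}p_{k+1}\r).
$$

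Next I would split this into three sums and shift the summation index in the first and the third so that each becomes a sum over the same variables $p_k$. Replacing $k$ by $k+1$ in $\sum_k r_k\beta_{k-1}p_{k-1}$ turns it into $\sum_k r_{k+1}\beta_k p_k$, and replacing $k$ by $k-1$ in $\sum_k r_k\delta_{k+1}p_{k+1}$ turns it into $\sum_k r_{k-1}\delta_k p_k$. The boundary terms produced by these shifts — the contributions at $k=-1$ and $k=N+1$, together with the terms involving $\beta_N$ and $\delta_0$ — all vanish because of the boundary conventions built into the tri-diagonal form of $A_N$ (equivalently $p_{-1}=p_{N+1}=0$ and $\beta_N=\delta_0=0$, which indeed hold in the motivating examples since $\delta_k$ is a multiple of $k$ and $\beta_k$ vanishes at $k=N$). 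Hence in all three sums the summation range can be kept as $k=0,\dots,N$.

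Finally, using $\alpha_k=\beta_k+\delta_k$ from \eqref{eq:alfakbetakdeltak} to split $-\alpha_k r_k p_k=-\beta_k r_k p_k-\delta_k r_k p_k$, I would collect the coefficient of $p_k$ in the single sum $\sum_{k=0}^N(\,\cdot\,)p_k$; it equals
$$
\beta_k r_{k+1}-\beta_k r_k+\delta_k r_{k-1}-\delta_k r_k=\beta_k(r_{k+1}-r_k)+\delta_k(r_{k-1}-r_k),
$$
which is exactly the asserted identity. (Conceptually this is just the adjoint relation $\dot r=\langle r,A_N p\rangle=\langle A_N^{\top}r,p\rangle$, the components of $A_N^{\top}r$ being precisely the bracketed expressions above, which is why the transpose $\e^{A_N^{\top}t}$ showed up earlier.) The whole argument is a mechanical rearrangement of a finite sum; the only point that needs genuine care — and the one I would check most carefully — is the bookkeeping of the boundary terms in the index shifts, i.e.\ verifying that no spurious $p_{-1}$, $p_{N+1}$, $r_{-1}$, $\beta_N$ or $\delta_0$ contributions survive.
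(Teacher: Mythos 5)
Your proposal is correct and follows essentially the same route as the paper's proof: term-by-term differentiation, substitution of the Kolmogorov equations, index shifts in the first and third sums with the boundary conventions $\beta_N=\delta_0=0$, and the splitting $\alpha_k=\beta_k+\delta_k$ to collect the coefficient of each $p_k$. The careful bookkeeping of boundary terms that you flag is exactly the point the paper's proof also relies on.
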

\begin{proof}
From \eqref{eq:Kolmogorov} we obtain
\begin{align*}
\dot r &= \sum_{k=0}^N r_k \dot p_k = \sum_{k=1}^N r_k \beta_{k-1} p_{k-1}  - \sum_{k=0}^N r_k \alpha_k p_k  + \sum_{k=0}^{N-1} r_k \delta_{k+1} p_{k+1} \\
& = \sum_{k=0}^{N-1} r_{k+1} \beta_{k} p_{k}  - \sum_{k=0}^N r_k \alpha_k p_k  + \sum_{k=1}^{N} r_{k-1} \delta_{k} p_k  .
\end{align*}
Using that $\beta_N=0$, $\delta_0=0$ and $\alpha_k =\beta_k+\delta_k$ we get
$$
\dot r  = \sum_{k=0}^N \left(r_{k+1} \beta_{k} -r_k (\beta_k+\delta_k) + r_{k-1} \delta_{k}\right) p_k  = \sum_{k=0} ^N  (\beta_k(r_{k+1}-r_k) +
\delta_k(r_{k-1}-r_k)) p_k  .
$$
\end{proof}

Before applying Lemma \ref{prop1} with $r_k=(k/N)^n$, it is useful to define the following two new expressions
$$
R_{k,n} = \frac{(k+1)^n -k^n-nk^{n-1}}{N^{n-1}} , \quad Q_{k,n} = \frac{(k-1)^n -k^n+nk^{n-1}}{N^{n-1}} .
$$
Let us introduce
\begin{equation}
d_n(t) =  \sum_{k=0}^N  ( \beta_k R_{k,n} + \delta_k Q_{k,n}) p_k(t) . \label{eq:dn}
\end{equation}
Combining these with Lemma \ref{prop1} leads to
\begin{equation*}
\dot y_n(t) = \sum_{k=0} ^N  \left( \frac{\beta_k}{N} \left( n\frac{k^{n-1}}{N^{n-1}}+
R_{k,n}\right) + \frac{\delta_k}{N} \left( -n\frac{k^{n-1}}{N^{n-1}}+ Q_{k,n}\right) \right)
p_k(t)
\end{equation*}
hence
\begin{equation}
\dot y_n(t)=n\cdot\sum_{k=0}^N\frac{\beta_k-\delta_k}{N}\cdot\l(\frac{k}{N}\r)^{n-1} \cdot
p_k(t)+\frac{1}{N}d_n(t).\label{eq:y_n1}
\end{equation}

Using the binomial theorem $R_{k,n}$ and $Q_{k,n}$ can be expressed in terms of the
powers of $k$, hence $d_n$ can be expressed as $ d_n(t) = \sum_{m=1} ^n d_{nm}y_m(t) $
with some coefficients $d_{nm}$.
The $d_n$ terms contain $N$, hence to use the $1/N \to 0$ limit it has to be shown that $d_n$ remains bounded as $N$ goes to infinity. This is proved in the next lemma.

\begin{lem}
For the functions $d_n$ the following estimates hold
$$
0\leq d_n(t) \leq c\cdot \frac{n(n-1)}{2} \qquad \mbox{ for all }  t\geq 0 .
$$
\label{prop2}
\end{lem}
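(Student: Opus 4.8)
The plan is to bound the two discrete expressions $R_{k,n}$ and $Q_{k,n}$ pointwise in $k$, and then feed these bounds into the definition \eqref{eq:dn} of $d_n$. The key observation is that, up to the factor $N^{1-n}$, both quantities are second-order Taylor remainders of the function $g(x)=x^n$, whose second derivative $g''(x)=n(n-1)x^{n-2}$ is nonnegative on $[0,\infty)$. Writing Taylor's formula with integral remainder,
$$(k+1)^n-k^n-nk^{n-1}=\int_k^{k+1} n(n-1)s^{n-2}(k+1-s)\ds,\qquad (k-1)^n-k^n+nk^{n-1}=\int_{k-1}^{k} n(n-1)s^{n-2}(s-(k-1))\ds,$$
where the second identity requires $k\ge1$ so that the interval of integration lies in $[0,\infty)$. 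In both integrals the integrand is nonnegative, so $R_{k,n}\ge0$ for $0\le k\le N-1$ and $Q_{k,n}\ge0$ for $1\le k\le N$; and bounding the integrand above by its value at the right endpoint, using $\int_k^{k+1}(k+1-s)\ds=\int_{k-1}^{k}(s-(k-1))\ds=\tfrac12$, gives
$$0\le R_{k,n}\le\frac{n(n-1)(k+1)^{n-2}}{2N^{n-1}}\le\frac{n(n-1)}{2N}\quad(0\le k\le N-1),\qquad 0\le Q_{k,n}\le\frac{n(n-1)k^{n-2}}{2N^{n-1}}\le\frac{n(n-1)}{2N}\quad(1\le k\le N).$$

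Next I would record the two elementary properties of the solution of \eqref{eq:Kolmogorov}: since the matrix $A_N$ has nonnegative off-diagonal entries and vanishing column sums, the functions $p_k(t)$ remain nonnegative and satisfy $\sum_{k=0}^N p_k(t)=\sum_{k=0}^N p_k(0)=1$ for all $t\ge0$. With $\beta_k,\delta_k\ge0$ this gives the lower bound $d_n(t)\ge0$ at once, every summand in \eqref{eq:dn} being nonnegative --- the index $k=0$ is harmless because $\delta_0=0$, so the sign restriction on $Q_{0,n}$ never enters, and likewise $k=N$ because $\beta_N=0$. For the upper bound, the pointwise estimates above yield
$$d_n(t)=\sum_{k=0}^N(\beta_k R_{k,n}+\delta_k Q_{k,n})p_k(t)\le\frac{n(n-1)}{2N}\sum_{k=0}^N(\beta_k+\delta_k)p_k(t),$$
and it remains to bound $(\beta_k+\delta_k)/N=(B_N(k)+D_N(k))/N$ by a constant $c$ independent of $k$ and $N$; this is exactly what the asymptotic density dependence \eqref{eq:BNDN}, sharpened by the uniform estimate \eqref{eq:limBN} and the continuity (hence boundedness) of $\beta,\delta$ on $[0,1]$, provides. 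Then $\sum_{k=0}^N(\beta_k+\delta_k)p_k(t)\le cN$ and $d_n(t)\le c\,\tfrac{n(n-1)}{2}$, as claimed.

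I do not anticipate a genuine difficulty; the only points needing care are (i) keeping $g(x)=x^n$ evaluated at nonnegative arguments, which forces the exclusion of $k=0$ in the $Q$-estimate and of $k=N$ in the $R$-estimate, and their absorption into $\delta_0=0$ and $\beta_N=0$ respectively; and (ii) making sure the constant $c$ bounding $(\beta_k+\delta_k)/N$ is genuinely uniform in $N$, which is the role of hypotheses \eqref{eq:BNDN}--\eqref{eq:limBN}. The case $n=1$ is immediate, since then $R_{k,1}=Q_{k,1}=0$ and both sides of the asserted inequality vanish.
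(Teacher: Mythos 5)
Your proof is correct and follows essentially the same route as the paper's: both arguments recognize $R_{k,n}$ and $Q_{k,n}$ as second-order Taylor remainders of $x\mapsto x^n$ (you use the integral form, the paper the Lagrange form), deduce nonnegativity and the bound $n(n-1)/(2N)$ from $\xi/N\leq 1$, and then use $(\beta_k+\delta_k)/N\leq c$ together with $\sum_k p_k(t)=1$. Your handling of the boundary indices via $\delta_0=0$ and $\beta_N=0$ is a point the paper glosses over, but it is not a different method.
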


\begin{proof}
Taylor's theorem, with second degree remainder in Lagrange form, states that
$$
f(x)=f(x_0) + f'(x_0)(x-x_0)+f''(\xi) \frac{(x-x_0)^2}{2} ,
$$
where $\xi$ is between $x_0$ and $x$. This simple result can be used to find estimates for  both $R_{k,n}$ and $Q_{k,n}$.
In particular, applying the above result when $f(x)=x^n$, $x=k+1$ and $x_0=k$ gives
$$
R_{k,n} = \frac{n(n-1)}{2}\frac{\xi^{n-2}}{N^{n-1}}
$$
with some $\xi \in [k,k+1]$. Similarly, when $x=k-1$ and $x_0=k$, we obtain
$$
Q_{k,n} = \frac{n(n-1)}{2}\frac{\eta^{n-2}}{N^{n-1}}
$$
with some $\eta \in [k,k+1]$. Hence, $R_{k,n}$ and $Q_{k,n}$ are non-negative yielding that $d_n(t)\geq 0$. On the other hand, $\xi /N \leq 1$ and $\eta /N \leq 1$ and \eqref{eq:betakdeltak}  lead to the inequality given below
$$
\beta_k R_{k,n} + \delta_k Q_{k,n} \leq \frac{n(n-1)}{2} \left( \frac{\beta_k}{N} + \frac{\delta_k}{N} \right) \leq c\cdot\frac{n(n-1)}{2} .
$$
Hence, the statement follows immediately from \eqref{eq:dn} and using
that $\sum_{k=0}^N p_k(t)=1$.
\end{proof}

We show two possible ways to turn (\ref{eq:y_n1}) into an ODE.
First, we use the approximation $E(F(X))=F(E(X)$ (where $E$ stands
for the expected value and $F$ is a given measurable function) to
derive the mean-field approximation. Then assuming that the Markov
chain is density dependent and the functions $\beta$ and $\delta$
are polynomials, we derive an infinite system of ODEs for the
moments.

\subsection{The mean-field approximation} \label{subsection3c}

Since $d_1=0$, applying (\ref{eq:y_n1}) for $n=1$ we obtain

\begin{equation}\label{eq:dey1}
\dot y_1(t)=\sum_{k=0}^N \l( \frac{B_N(k)}{N}-\frac{D_N(k)}{N} \r) \cdot p_k(t).
\end{equation}

Using the asymptotic density dependence of the Markov chain (\ref{eq:BNDN}), the right-hand side
can be approximated by
$$
\sum_{k=0}^N\l(\beta\l(\frac{k}{N}\r)-\delta\l(\frac{k}{N}\r)\r)\cdot
p_k(t) .
$$

%
%
In order to make the equation ``closed'' the approximation
\begin{equation}
\sum_{k=0}^N\l(\beta\l(\frac{k}{N}\r)-\delta\l(\frac{k}{N}\r)\r)\cdot
p_k(t)\approx \beta\l(\sum_{k=0}^N\frac{k}{N}\cdot
p_k(t)\r)-\delta\l(\sum_{k=0}^N\frac{k}{N}\cdot p_k(t)\r)
\end{equation}
will be used. Substituting this approximation into the equation \eqref{eq:dey1} we obtain the
following differential equation

\begin{equation}
\dot {x} = \beta(x)-\delta(x).\tag{ODE}\label{eq:dey}
\end{equation}
This equation is known as the \emph{mean-field approximation} of the original Kolmogorov equation
\eqref{eq:Kolmogorov}.

\subsection{Infinite system of ODEs in the polynomial case}

In this Subsection it is assumed that the functions in
\eqref{eq:betakdeltak} are polynomials and the Markov chain is
density dependent, that is,
\begin{equation}
    \frac{\beta_k}{N}=\beta\l(\frac{k}{N}\r), \quad \frac{\delta_k}{N}=\delta\l(\frac{k}{N}\r)
    \label{eq:betak}
\end{equation}
and
\begin{equation}
\beta(x)= \sum_{j=0}^l g_j x^j, \qquad  \delta(x)= \sum_{j=0}^l h_j x^j . \label{eq:bdpol}
\end{equation}
Using this and denoting
\begin{equation}
q_j:=g_j-h_j, \quad j=0,1,\ldots ,l,
\end{equation}
from \eqref{eq:y_n1} we obtain that
\begin{align}\label{eq:momentum_eq1}
\dot y_n(t) &=n\cdot\sum_{k=0}^N \sum_{j=0}^l q_j \l(\frac{k}{N}\r)^{n+j-1} \cdot p_k(t) +\frac{1}{N}d_n(t)\notag\\
&= n\cdot \sum_{j=0}^lq_jy_{n+j-1}(t)+\frac{1}{N}d_n(t)
\end{align}

with
\begin{equation}
    0\leq d_n(t)\leq c\cdot \frac{n\cdot (n-1)}{2}.
    \label{eq:d_n}
\end{equation}

Letting $N\to\infty$ on the right-hand-side, we arrive at the
following system
\begin{align}
    \dot{f}_n(t) &=n\cdot\sum_{j=0}^l q_jf_{n+j-1}(t)\notag\\
    n&=1,2,\ldots,\tag{IE}
    \label{eq:fn}
\end{align}
that can be regarded as a system of ``approximating'' differential
equations for \eqref{eq:momentum_eq1}. In Section \ref{sec:InfODE}
we are going to investigate how  $y_1(t)$ can be approximated on
finite time intervals using the solution of this infinite system.

\begin{rem}
All the results for the infinite system obtained from here on
remain valid in the asymptotically density dependent case when
$$
\frac{\beta_k}{N}=\sum_{j=0}^l \tilde{g}_j (N) k^j, \qquad \frac{\delta_k}{N}=\sum_{j=0}^l
\tilde{h}_j(N) k^j
$$
and
\begin{equation*}
    \tilde{g}_j(N)=\frac{g_j}{N^j}+\mathcal{O}\l(\frac{1}{N^{j+1}}\r),\quad \tilde{h}_j(N)=\frac{h_j}{N^j}+\mathcal{O}\l(\frac{1}{N^{j+1}}\r),\quad j=0,\ldots
    ,l.
\end{equation*}
The only difference is that in \eqref{eq:momentum_eq1} we obtain
$$\dot y(t)=n\cdot \sum_{j=0}^lq_jy_{n+j-1}(t)+\frac{1}{N}d_n(t)+\mathcal{O}\l(\frac{1}{N}\r).$$
\end{rem}

\section{Proof of Theorem 1}\label{sec:Kurtz}

In this Section we prove that the solution of \eqref{eq:dey} is an $\mathcal{O}(1/N)$
approximation of the expected value of the Markov chain, that is we prove Theorem 1.

Let us introduce the matrix $A_N$ as in Section \ref{sec:int}
$$A_N:=\left(
  \begin{array}{cccccc}
    -\alpha_0 & \delta_1 & 0 & \cdots & \cdots & 0 \\
    \beta_0 & -\alpha_1 & \delta_2 & \cdots & \cdots & 0 \\
    0 & \beta_1 & -\alpha_2 & \delta_3 & \cdots & 0\\
    \vdots & \vdots & \ddots & \ddots & \ddots & \vdots\\
    0 & 0 &  \cdots &\beta_{N-2} & -\alpha_{N-1} & \delta_N\\
    0 & 0 & \cdots & 0 & \beta_{N-1} & -\alpha_N
  \end{array}
\right).$$

Then the operator families $\l(T_N(t)\r)_{t\geq 0}$ defined as
\begin{equation*}
T_N(t):=\l[p_{j,k}(t)\r] = \e^{A_N^{\top} t}
\end{equation*}
form uniformly continuous semigroups on $\CC^{N+1}$  for each $N\in\NN$.
This yields
\begin{equation}\label{eq:TNhatas}
\l(T_N(t)f\r)\l(\frac{j}{N}\r)=\sum_{k=0}^Nf\l(\frac{k}{N}\r)\cdot p_{j,k}(t)
\end{equation}
for $f=(f_0,\dots ,f_{N})\in\CC^{N+1}$ where we make the identification
$$\CC^{N+1}\equiv\l\{f:f \text{ maps }\{0,\frac{1}{N},\frac{2}{N},\cdots ,1\}\text{ to } \RR\r\}.$$
\vspace{0.5cm}

Assume for the sake of simplicity that on the right-hand-side of \eqref{eq:dey} the functions
$\beta, \delta\in C^2[0,1]$. If we denote the solution of \eqref{eq:dey} with initial condition $x_0$ by
$\varphi(t,x_0)$, then the operator family defined as
\begin{equation}\label{eq:Thatas}
(T(t)f)(x_0):=f(\varphi(t,x_0)),\quad f\in C([0,1]),\; t\geq 0
\end{equation}
defines a strongly continuous operator semigroup on $C([0,1])$ (see Engel, Nagel \cite[Section 3.28]{EN:00}) with generator $(A,D(A))$. We also know that for $f\in C^1([0,1])$

\begin{equation*}
(Af)(x_0)=\l(\beta(x_0)-\delta(x_0)\r)\cdot f'(x_0).
\end{equation*}

The main idea is to approximate the semigroup $\l(T_N(t)\r)_{t\geq 0}$ (that is, the solution of
the transposed \eqref{eq:Kolmogorov}) using the semigroup  $\l(T(t)\r)_{t\geq 0}$ (that is, the
solution of the mean-field equation \eqref{eq:dey}). Observe that the semigroups act on different
spaces: the first one acts on $X_N:=\CC^{N+1}$, the second one on $X:=C^1([0,1])$. In order to
prove an approximation Theorem in a fixed space, we assume that there are linear operators
\begin{align}
&J_N:X_N\to X,\quad J_N(f):=g,\label{eq:J_N}\\
&P_N:X\to X_N,\quad P_N(g):=f\text{ with }f\l(\frac{k}{N}\r)=g\l(\frac{k}{N}\r),\; k=0,\dots ,N\label{eq:P_N}
\end{align}

such that $\|J_N\|\leq 1$, $\|P_N\|\leq 1$, $N\in\NN$, and
\begin{align*}
P_NJ_N&=\Id_{X_N},\quad N\in\NN;\\
J_NP_Nf&\to f,\quad N\to\infty\; \forall f\in X
\end{align*}
are satisfied (see B\'atkai et al. \cite[Definition 3.5]{BCsN:09}). The next Lemma formulates the main approximation result in rigorous terms.

\begin{lem}\label{lem:Kurtz}
Assume that the conditions of Theorem 1 are satisfied. For $\l(T_N(t)\r)_{t\geq 0}$ and $\l(T(t)\r)_{t\geq
0}$ the following holds: for all $f\in C^2([0,1])$ and $t_0>0$ there exists $C=C(f,t_0)>0$ such
that for all $t\in [0,t_0]$
\begin{equation}
\|(P_NT(t)-T_N(t)P_N)f\|\leq\frac{C}{N},
\end{equation}
where $P_N$ denotes the projection defined in \eqref{eq:P_N}.
\end{lem}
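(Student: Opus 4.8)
The plan is to prove the estimate by the standard semigroup comparison technique, often called the \emph{telescoping} or \emph{Chernoff-type} identity, adapted to give a quantitative rate. First I would write, for fixed $f\in C^2([0,1])$ and $t\in[0,t_0]$, the identity
\begin{equation*}
P_NT(t)f-T_N(t)P_Nf=\int_0^t \frac{d}{ds}\l(T_N(t-s)P_NT(s)f\r)\ds
=\int_0^t T_N(t-s)\l(P_NA-A_N^{\top}P_N\r)T(s)f\ds,
\end{equation*}
using that $\frac{d}{ds}T(s)f=T(s)Af=AT(s)f$ (valid since $f\in C^2\subset D(A)$ and $T(s)$ maps $C^2$ into $C^1$), and that $\frac{d}{ds}T_N(t-s)g=-A_N^{\top}T_N(t-s)g$. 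The key analytic inputs are then: (i) the semigroups $T_N(t)$ are uniformly bounded (indeed contractive, or at worst bounded by $e^{\omega t}$) on $\CC^{N+1}$ with the sup-norm, which follows from $A_N^{\top}$ being essentially a Kolmogorov/$Q$-matrix — the column sums of $A_N$ are $\le 0$, so $T_N$ is sub-Markovian; (ii) the semigroup $T(s)$ maps $C^2([0,1])$ into itself with a norm bound on $[0,t_0]$ depending only on $t_0$, $\beta$, $\delta$ and $\|f\|_{C^2}$, which follows from differentiating the flow $\varphi(s,x_0)$ twice in $x_0$ (Gr\"onwall on the variational equations, using $\beta,\delta\in C^2$).

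The heart of the argument is the consistency estimate: I must show
\begin{equation*}
\l\|\l(P_NA-A_N^{\top}P_N\r)g\r\|\leq \frac{C_1}{N}\,\|g\|_{C^2([0,1])}
\end{equation*}
for $g\in C^2$. To see this, fix $g$ and let $k\in\{0,\dots,N\}$. On one hand $(P_NAg)(k/N)=(\beta(k/N)-\delta(k/N))g'(k/N)$. On the other hand, since $A_N^{\top}$ acts on a vector $h=P_Ng$ by $(A_N^{\top}h)_k=\beta_{k-1}\cdots$ — more precisely, reading off the columns of $A_N$, $(A_N^{\top}P_Ng)(k/N)=\beta_k\,g(\tfrac{k+1}{N})-\alpha_k\,g(\tfrac{k}{N})+\delta_k\,g(\tfrac{k-1}{N})$. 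Exactly as in Lemma~\ref{prop1}, this equals $\beta_k\bigl(g(\tfrac{k+1}{N})-g(\tfrac kN)\bigr)+\delta_k\bigl(g(\tfrac{k-1}{N})-g(\tfrac kN)\bigr)$. Now Taylor expansion of $g$ to second order (Lagrange remainder, as already used in the proof of Lemma~\ref{prop2}) gives
\begin{equation*}
g(\tfrac{k\pm1}{N})-g(\tfrac kN)=\pm\tfrac1N g'(\tfrac kN)+\tfrac{1}{2N^2}g''(\xi_{\pm}),
\end{equation*}
so
\begin{equation*}
(A_N^{\top}P_Ng)(\tfrac kN)=\tfrac{\beta_k-\delta_k}{N}g'(\tfrac kN)+\tfrac{1}{2N^2}\bigl(\beta_k g''(\xi_+)+\delta_k g''(\xi_-)\bigr).
\end{equation*}
Using $\tfrac{\beta_k}{N},\tfrac{\delta_k}{N}\le c$ (from \eqref{eq:betakdeltak}) the remainder is $O(\|g''\|_\infty/N)$, and using the uniformity hypothesis \eqref{eq:limBN} one replaces $\tfrac{\beta_k-\delta_k}{N}g'(\tfrac kN)$ by $(\beta(\tfrac kN)-\delta(\tfrac kN))g'(\tfrac kN)$ up to an error $\le \tfrac{2L}{N}\|g'\|_\infty$. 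Combining, $|(P_NAg-A_N^{\top}P_Ng)(k/N)|\le \tfrac{C_1}{N}\|g\|_{C^2}$ uniformly in $k$, which is the claim.

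Putting the pieces together: bounding the integrand by $\|T_N(t-s)\|\cdot\|(P_NA-A_N^{\top}P_N)T(s)f\|\le M\cdot \tfrac{C_1}{N}\|T(s)f\|_{C^2}\le M\cdot\tfrac{C_1}{N}\cdot M'$ on $[0,t_0]$, integration over $s\in[0,t]$ yields $\|(P_NT(t)-T_N(t)P_N)f\|\le \tfrac{M M' C_1 t_0}{N}=:\tfrac CN$, with $C=C(f,t_0)$. The main obstacle I anticipate is the rigorous handling of (i) and (ii): justifying that $T(s)f$ genuinely lies in $C^2$ with a locally uniform bound requires care with the regularity of the flow near the endpoints $x_0\in\{0,1\}$ (one needs $\beta,\delta\in C^2$ precisely for this, which is why the excerpt assumes it), and showing $\|T_N(t)\|\le M$ uniformly in $N$ in the sup-norm requires noting that $A_N^\top$, though not the transpose of a stochastic generator in general (the $\alpha_k$ make column sums vanish, but here $\alpha_k=\beta_k+\delta_k$ makes them vanish exactly, so $A_N$ is conservative), generates a sub-Markov or Markov semigroup — a short positivity/maximum-principle argument. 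Everything else is the routine Taylor estimate already rehearsed in Lemma~\ref{prop2}.
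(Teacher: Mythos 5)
Your proposal is correct and follows essentially the same route as the paper: the variation-of-parameters (telescoping) identity reducing the semigroup difference to the generator difference, the consistency estimate split into an asymptotic-density-dependence term (via \eqref{eq:limBN}) and a Taylor remainder term of order $1/N$, and the $C^2$-invariance of $T(s)$ together with boundedness of the Markov semigroups $T_N$. The only cosmetic difference is that you write the generator of $T_N$ explicitly as $A_N^{\top}$, where the paper slightly abuses notation and writes $A_N$.
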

\begin{proof}
For the generators $A_N$, $A$, and for $f\in C^2([0,1])$ the
following identities hold:
\begin{align*}
(P_NAf)\l(\frac{k}{N}\r)&=\l(\beta\l(\frac{k}{N}\r)-\delta\l(\frac{k}{N}\r)\r)\cdot f'\l(\frac{k}{N}\r);\\
(A_NP_Nf)\l(\frac{k}{N}\r)&=\frac{\beta_k}{N}\cdot\frac{f(\frac{k+1}{N})-f(\frac{k}{N})}{\frac{1}{N}}-\frac{\delta_k}{N}\cdot\frac{f(\frac{k}{N})-f(\frac{k-1}{N})}{\frac{1}{N}},
\end{align*}
$k=0,\dots ,N.$ The idea of proving the estimate for the semigroups is to estimate the difference
of the generators. Their difference can be divided into two parts as follows.
\begin{align*}
(P_NAf)\l(\frac{k}{N}\r) - (A_NP_Nf)\l(\frac{k}{N}\r) &= \l(
\beta\l(\frac{k}{N}\r)-\frac{\beta_k}{N}
-\delta\l(\frac{k}{N}\r)+\frac{\delta_k}{N}\r) f'\l(\frac{k}{N}\r) \\
+ \frac{\beta_k}{N} \l(f'(\frac{k}{N}) -
\frac{f(\frac{k+1}{N})-f(\frac{k}{N})}{\frac{1}{N}} \r) &+
\frac{\delta_k}{N} \l(
\frac{f(\frac{k}{N})-f(\frac{k-1}{N})}{\frac{1}{N}} -
f'(\frac{k}{N}) \r) .
\end{align*}
The first part can be estimated by using the asymptotic density dependence \eqref{eq:limBN} as
\begin{equation}\label{eq:betakNestimate}
\l| \beta\l(\frac{k}{N}\r)-\frac{\beta_k}{N} \r| =
\l|\beta\l(\frac{k}{N}\r)-\frac{B_N(N\frac{k}{N})}{N}\r| \leq
\frac{L}{N}
\end{equation}
and
\begin{equation}\label{eq:deltakNestimate}
\l| \delta\l(\frac{k}{N}\r)-\frac{\delta_k}{N} \r| =
\l|\delta\l(\frac{k}{N}\r)-\frac{D_N(N\frac{k}{N})}{N}\r| \leq
\frac{L}{N}
\end{equation}
and using that $f'$ is bounded.

The second part can be estimated by using Taylor's formula for $f\in C^2([0,1])$. We obtain that for each $k=0,\dots ,N$ there exists $\xi_k\in (\frac{k}{N},\frac{k+1}{N})$ such that
\begin{equation}
\l|\frac{f(\frac{k+1}{N})-f(\frac{k}{N})}{\frac{1}{N}}-f'(\frac{k}{N})\r|=\l|N\cdot
\frac{f''(\xi_k)}{2}\cdot \frac{1}{N^2}\r|\leq
\frac{1}{2N}\l\|f''\r\|.
\end{equation}
Since $\beta$ and $\delta$ are bounded on $[0,1]$ (they are in $C^2[0,1]$), we obtain from \eqref{eq:betakNestimate} and \eqref{eq:deltakNestimate} that $\frac{\beta_k}{N}$, $\frac{\delta_k}{N}$ are uniformly bounded for all $k=0,\dots ,N$ and $N\in\NN$. Hence, for all $f\in C^2([0,1])$ there exists $K>0$ such that
\begin{equation}\label{eq:generatordiff}
\l\|\l(P_NA-A_NP_N\r)f\r\|\leq \frac{K}{N}\|f''\|+\frac{L}{N}\|f'\|.
\end{equation}

It is easy to see  that for all $N$, the operators $\widetilde{T}_N(t):=J_N T_N(t)P_N$, $t\geq 0$ form a strongly continuous semigroup on $X$ -- where $J_N$ is defined in \eqref{eq:J_N} --  with generator $\widetilde{A}_N:=J_N A_NP_N$.

Using the variation of parameters formula (see e.g.~Engel, Nagel
\cite[Corollary III.1.7]{EN:00}), for the difference of the
(projected) semigroups, and for $f\in C^2([0,1])$ we obtain that
\begin{align*}
\l(P_NT(t)-T_N(t)P_N\r)f&=P_N\l(T(t)f-\widetilde{T}_N(t)f\r)\\
&=\int_0^tP_N \widetilde{T}_N(t-s)\l(A-\widetilde{A}_N\r)T(s)f\ds\\
&= \int_0^tT_N(t-s)\l(P_NA-A_NP_N\r)T(s)f\ds.
\end{align*}
By the estimate in \eqref{eq:generatordiff} and using that $T(t)$ maps the subspace $C^2([0,1])$
of $X$ into itself (see Chicone \cite[Theorem 1.3]{Chi:06}), we obtain that for $f\in C^2([0,1])$ there
exist $K^*>0$ and $L^*>0$, such that
$$\l\|\l(P_NT(t)-T_N(t)P_N\r)f\r\|\leq \int_0^t \frac{K^*}{N}\cdot \|(T(s)f)''\| + \frac{L^*}{N}\cdot \|(T(s)f)'\| \ds.$$
Since $\beta, \delta\in C^2([0,1])$, the solution function $\varphi$ of \eqref{eq:dey} is also a $C^2$-function of the initial data.  Hence, for $s\in[0,t]$ there exists $K_t>0$ such that
$$
\l\|(T(s)f)''\r\|=\l\|(f\circ\varphi(s,\cdot))''\r\|=\l\|(f''\circ \varphi)\cdot
(\varphi')^2+(f'\circ \varphi)\cdot \varphi''\r\|\leq K_t\cdot \|f\|_{C^2}
$$
and similarly $\l\|(T(s)f)'\r\| \leq K^*_t\cdot \|f\|_{C^2}$. This yields that for $f\in
C^2([0,1])$ and $t_0>0$ there exists $C=C(f,t_0)>0$ such that for all $t\in [0,t_0]$
\begin{equation*}
\|\l(P_NT(t)-T_N(t)P_N\r)f\|\leq\frac{C}{N}.
\end{equation*}
\end{proof}

Using this semigroup approximation Lemma we can now prove Theorem 1.

\begin{proof}[Proof of Theorem 1]
We apply Lemma \ref{lem:Kurtz} for $f=\mathrm{id}_{[0,1]}.$ We obtain that for any $t_0> 0$ there
exists $C=C(\mathrm{id},t_0)$ such that for all $t\in [0,t_0]$
\begin{equation}\label{eq:thmSIS_biz}
\l\|\l(P_NT(t)-T_N(t)P_N\r)\mathrm{id}\r\|\leq\frac{C}{N}.
\end{equation}
Observe that by \eqref{eq:momentum} and \eqref{eq:TNhatas}, the
following relation holds
\begin{align*}
y_1(t)&=\sum_{k=0}^N\frac{k}{N}\cdot p_k(t)=\sum_{k=0}^N\frac{k}{N}\cdot \sum_{j=0}^Np_{j,k}(t)\cdot p_j(0)\\
&=\sum_{j=0}^Np_j(0)\sum_{k=0}^N\mathrm{id}(\frac{k}{N})\cdot p_{j,k}(t)=\sum_{j=0}^Np_j(0)\cdot\l(T_N(t)P_N\mathrm{id}\r)(\frac{j}{N})\\
&=\langle p(0),T_N(t)P_N\mathrm{id}\rangle.
\end{align*}
Furthermore, using \eqref{eq:Thatas} it is easy to show that
$$\l(P_NT(t)\mathrm{id}\r)(\frac{k}{N})=\varphi(t,\frac{k}{N}).$$
It can be seen that it is enough to prove the statement when the
initial condition is $p_m(0)=1$, $p_j(0)=0$, $j\neq m$. Then
$y_1(0) = \frac{m}{N}$ yielding $x(0) = \frac{m}{N}$ and therefore
$x(t)=\varphi\l(t,\frac{m}{N}\r)$. Hence, combining the above
facts we obtain
\begin{align*}
\l|y_1(t) - x(t)\r|&=\l|\langle p(0),T_N(t)P_N\mathrm{id}\rangle-\varphi(t,\frac{m}{N})\r|\\
&=\l|\l(T_N(t)P_N\mathrm{id}\r)(\frac{m}{N})-\l(P_NT(t)\mathrm{id}\r)(\frac{m}{N})\r|\leq\frac{C}{N}
\end{align*}
where we used \eqref{eq:thmSIS_biz}.
\end{proof}

\section{Infinite system of ODEs for the moments} \label{sec:InfODE}

In this Section we consider the infinite system of ODEs
\eqref{eq:momentum_eq1} for the moments and its formal limit as
$N\to \infty$ \eqref{eq:fn}. Our aim here is to prove that the
solutions of the first system converge to those of the second as
$N\to \infty$ and $t$ is in a bounded time interval. That is, that the
moments of the Markov chain converge to the solutions of the
infinite system \eqref{eq:fn}. As a bi-product of this result we
get a completely new proof of Theorem 1 under extra sign
conditions on the transition rates.


In order to prove this general convergence result were are going to use the approach of Kato
\cite{Kat:54} and Banasiak et al.~\cite{BLN:06}.

\begin{lem}\label{thm:generator}
Denote the formal operator
\begin{equation*}
\l(\sL f\r)_n:=n\cdot\sum_{j=0}^l q_jf_{n+j-1},\quad n=1,2,\ldots.
\end{equation*}
Let
\begin{equation*}
\sD(L_{\max}):=\l\{f\in\ell^1:\sL f\in\ell^1\r\}, \quad L_{\max}:=\sL |\sD(L_{\max}).
\end{equation*}
Assume that $q_0,q_2,\ldots,q_l\geq 0$, $q_1\leq 0$ and
\begin{align}
q_0+q_1+\cdots +q_l&\leq 0;\\
q_0-2q_3-3q_4-4q_5-\cdots -(l-1)q_l&\leq 0.
\end{align}
Then there exists an operator $(L,D)$ such that $L\subset L_{\max}$ and $(L,D)$ is the generator
of a positive strongly continuous semigroup $\l(T(t)\r)_{t\geq 0}$ of contractions on $\ell^1.$
\end{lem}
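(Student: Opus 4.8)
The plan is to construct the semigroup generator via the standard substochastic/perturbation machinery of Kato and Banasiak–Lachowicz–Nagy for infinite systems of ODEs on $\ell^1$. I would first split the formal operator as $\sL = \mathcal{A} + \mathcal{B}$, where $\mathcal{A}$ collects the ``diagonal'' and ``negative'' part and $\mathcal{B}$ the remaining nonnegative off-diagonal part. The natural choice is to put the term $n q_1 f_n$ (which is $\le 0$ since $q_1\le 0$) together with the term $n q_0 f_{n-1}$ into $\mathcal{A}$ (note $f_{n-1}$ has a \emph{lower} index, so this is a lower-triangular perturbation of the diagonal), and to let $\mathcal{B}$ be $(\mathcal{B}f)_n = n\sum_{j=2}^l q_j f_{n+j-1}$, which has nonnegative coefficients because $q_2,\ldots,q_l\ge 0$. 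The diagonal part $n q_1$ is $\le 0$ and tends to $-\infty$, so $\mathcal{A}$ (with its maximal domain) generates a positive contraction semigroup on $\ell^1$ — this is immediate from the explicit resolvent, and the lower-triangular piece $nq_0 f_{n-1}$ is a relatively bounded, positive perturbation that can be absorbed at this stage (or handled together with $\mathcal{B}$).

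Next I would invoke the Kato–Voigt perturbation theorem (in the form used in Banasiak et al. \cite{BLN:06}): if $\mathcal{A}$ generates a positive contraction semigroup, $\mathcal{B}\ge 0$, $D(\mathcal{A})\subset D(\mathcal{B})$, and for all $0\le f\in D(\mathcal{A})$ one has
\begin{equation*}
\sum_{n=1}^\infty \bigl( (\mathcal{A}f)_n + (\mathcal{B}f)_n \bigr) \le 0,
\end{equation*}
then $\overline{\mathcal{A}+\mathcal{B}}$ generates a positive contraction semigroup on $\ell^1$, and it is a restriction of $L_{\max}$. The pair $(L,D)$ is then defined as this closure. The entire argument thus reduces to verifying the above dissipativity-type inequality on nonnegative finitely-supported sequences, where the manipulations are justified termwise.

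The computational heart — and the step I expect to be the main obstacle — is the verification of that inequality, i.e. showing $\sum_n n\sum_{j=0}^l q_j f_{n+j-1} \le 0$ for every $0\le f\in\ell^1$ finitely supported. Exchanging the order of summation and shifting the index $n$, the coefficient of a fixed $f_m$ becomes $\sum_{j=0}^l q_j (m-j+1)$ (with appropriate truncation near the low end $m=1,\ldots,l-1$, which is exactly where boundary corrections appear). For large $m$ this coefficient is $(m+1)\sum_j q_j - \sum_j j q_j$; the first hypothesis $q_0+\cdots+q_l\le 0$ kills the growing term, and one then needs the residual constant to be $\le 0$, which should reduce — after accounting for the low-index boundary terms — to the second hypothesis $q_0 - 2q_3 - 3q_4 - \cdots - (l-1)q_l \le 0$. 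I would carefully track the boundary terms for $m\le l-1$, since that is precisely what forces the two stated sign conditions rather than a single one; once the bookkeeping is done, nonnegativity of $f$ and of $q_2,\ldots,q_l$ together with these two inequalities give the claim. Finally, positivity of the semigroup and the inclusion $L\subset L_{\max}$ come for free from the construction (Kato's theorem delivers both), and strong continuity and contractivity are part of the Hille–Yosida conclusion.
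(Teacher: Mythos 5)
Your proposal is essentially the paper's own argument: the paper likewise reduces the lemma to the nonpositivity of the column sums $(n+1)q_0+nq_1+(n-1)q_2+\cdots+(n-l+1)q_l$ of the coefficient matrix of $\sL$ and then invokes Theorem~1 of Kato \cite{Kat:54} (the substochastic/Kato--Voigt form you describe), so the splitting into a dissipative diagonal part plus a positive off-diagonal perturbation and the column-sum verification are exactly the intended steps. The paper is no more explicit than you are about the low-index boundary bookkeeping --- it simply asserts the untruncated column-sum inequality for all $n\ge 1$ and notes that it follows from the two sign hypotheses --- so your plan matches the published proof in both strategy and level of detail.
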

\begin{proof}
Building the infinite coefficient matrix for $\sL$, the assumptions imply that all the column sums
are less or equal than $0$:
\begin{equation}\label{column_sums}
(n+1)q_0+nq_1+(n-1)q_2+(n-2)q_3+\cdots +(n-l+1)q_l\leq 0,\quad n=1,2,\ldots.
\end{equation}
Hence, using the assumptions on the signs of the $q_j$'s, we can apply Theorem 1 of Kato in \cite{Kat:54}.
Though the assumptions of Kato's theorem are referred to the equality instead of the inequality in
\eqref{column_sums}, the proof works without any changes in our case. Thus, we do not repeat the
proof here.
\end{proof}

Let us turn to our original question on the expected value $y_1(t)$ -- that is, the first
coordinate of the solution of \eqref{eq:momentum_eq1}.
\begin{thm}
Assume that the conditions of Lemma \ref{thm:generator} are satisfied. Then there exists an
appropriate Banach space of sequences $\ell^1_w$ with norm $\|\cdot\|_w$ such that for any initial
condition $y_0\in\ell^1_w$ the solution $y(t):=\l(y_n(t)\r)_{n\in\NN}$ of \eqref{eq:momentum_eq1}
and the solution $f(t):=\l(f_n(t)\r)_{n\in\NN}$ of \eqref{eq:fn} satisfy
\begin{equation}\label{eq:yt-ft}
\l\|y(t)-f(t)\r\|_w\leq\frac{K}{N}\quad \text{ for }t\in[0,T],
\end{equation}
that is the solution of \eqref{eq:momentum_eq1} tends to the solution of \eqref{eq:fn} in a finite
time interval as $N\to\infty.$
\end{thm}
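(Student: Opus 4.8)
The plan is to realise both the finite-$N$ moment system \eqref{eq:momentum_eq1} and its formal limit \eqref{eq:fn} as perturbed abstract Cauchy problems for one realisation of the operator $\sL$, but on a suitably weighted sequence space rather than on $\ell^1$, and then to read the $\mathcal{O}(1/N)$ rate off the variation-of-constants formula. Concretely, I would take $\ell^1_w:=\bigl\{g=(g_n)_{n\geq1}:\|g\|_w:=\sum_{n\geq1}w_n|g_n|<\infty\bigr\}$ with a polynomially decaying weight, say $w_n=n^{-p}$ for a fixed $p>3$. Multiplication by the weight is an isometric isomorphism $\ell^1_w\to\ell^1$ which conjugates $\sL$ into the operator with matrix entries $(w_m/w_n)L_{mn}$; this only scales the off-diagonal entries by positive constants and leaves the diagonal $nq_1\leq0$ untouched, so the conjugated operator has exactly the sign pattern required for the Kato--Banasiak argument already used in Lemma \ref{thm:generator}. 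The one additional point to verify is that its column sums stay bounded above: the $n$-th column sum equals $S_n=n^{p}\sum_{j=0}^{l}(n-j+1)^{1-p}q_j$, which, on expanding $(n-j+1)^{1-p}=n^{1-p}\bigl(1+(1-j)/n\bigr)^{1-p}$, behaves like $n\bigl(\textstyle\sum_jq_j\bigr)+(1-p)\bigl(q_0-q_2-2q_3-\cdots-(l-1)q_l\bigr)+\mathcal{O}(1/n)$. The hypothesis $q_0+\cdots+q_l\leq0$ controls the leading term and the second sign condition of Lemma \ref{thm:generator} the constant, so $\omega:=\max\{0,\sup_nS_n\}<\infty$. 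Hence, exactly as in Lemma \ref{thm:generator}, a realisation of $\sL$ generates a positive strongly continuous semigroup $(S(t))_{t\geq0}$ on $\ell^1_w$ with $\|S(t)\|\leq e^{\omega t}$.

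With this space fixed, the remaining ingredients fall into place. The moments of the chain satisfy $0\leq y_n(t)=\sum_k(k/N)^np_k(t)\leq1$, so $y(t)\in\ell^1_w$ uniformly in $t$, and $y_0\in\ell^1_w$ by hypothesis. The forcing term $d(t)=(d_n(t))_n$ from \eqref{eq:dn} satisfies the bound \eqref{eq:d_n}, whence $\|d(t)\|_w\leq\tfrac c2\sum_nw_n\,n(n-1)=:M_0<\infty$ uniformly in $t$, precisely because $p>3$, and $t\mapsto d(t)$ is continuous into $\ell^1_w$. By \eqref{eq:momentum_eq1} the moment sequence $y(\cdot)$ solves $\dot u=\sL u+\tfrac1N d(t)$ with $u(0)=y_0$, while $f(\cdot)$ from \eqref{eq:fn} solves the homogeneous problem with the same initial value; identifying these genuine solutions of the infinite ODE systems with the corresponding orbits of $(S(t))$ --- using that $(S(t))$ arises as the monotone limit of the finite truncations, cf.\ \cite{Kat:54,BLN:06} --- the variation-of-constants formula yields $y(t)-f(t)=\tfrac1N\int_0^tS(t-s)d(s)\ds$.

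Taking $\|\cdot\|_w$-norms and using $\|S(t-s)\|\leq e^{\omega(t-s)}$ and $\|d(s)\|_w\leq M_0$,
\begin{equation*}
\|y(t)-f(t)\|_w\leq\frac{M_0}{N}\int_0^te^{\omega(t-s)}\ds\leq\frac{M_0\,T\,e^{\omega T}}{N}=:\frac KN,\qquad t\in[0,T],
\end{equation*}
which is \eqref{eq:yt-ft}. The same estimate can also be reached without ever identifying $y,f$ with semigroup orbits: writing $e:=y-f$, one differentiates $\phi(t):=\|e(t)\|_w$ and combines the $\ell^1_w$-dissipativity inequality $\sum_nw_n\,\mathrm{sgn}(e_n)(\sL e)_n\leq\omega\|e\|_w$ (the same column-sum computation as above) with Gronwall's lemma. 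In particular, retaining only the first coordinate gives $|y_1(t)-f_1(t)|\leq w_1^{-1}\|y(t)-f(t)\|_w=\mathcal{O}(1/N)$, a second, self-contained proof of Theorem 1 under the present sign conditions.

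I expect the real difficulty to lie in the first step: producing a weighted space that is at once large enough for $f(t)$ to stay in it, small enough for the $\mathcal{O}(n^2)$ forcing $d(t)$ to be summable against the weight, and such that the weight-conjugated coefficient matrix still meets the hypotheses of the Kato--Banasiak theorem (unchanged sign pattern, column sums bounded above). A heavier weight helps with $d(t)$ but inflates the super-diagonal entries and hence the column sums, so reconciling these demands is exactly where the freedom in $p$ and the two sign conditions of Lemma \ref{thm:generator} are spent; a subsidiary technical point is the identification of the true moment sequences with the orbits of the constructed semigroup (equivalently, uniqueness/honesty for the limit system). Once these are settled, the Duhamel and Gronwall parts are routine.
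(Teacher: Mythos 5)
Your proposal is correct and follows the same skeleton as the paper's proof: recast \eqref{eq:momentum_eq1} as the abstract problem $\dot y=\sL y+\frac1N d(t)$ on a weighted $\ell^1$ space chosen so that the quadratically growing forcing \eqref{eq:d_n} becomes summable, generate a semigroup for a realisation of $\sL$ there via the Kato--Banasiak construction of Lemma \ref{thm:generator}, and read the $\mathcal{O}(1/N)$ bound off the variation-of-constants formula. The differences lie in how the weighted-space step is executed. The paper takes the geometric weight $\|x\|_w=\sum_n |x_n|r^n$, $|r|<1$, and simply asserts that Lemma \ref{thm:generator} applies on $\ell^1_w$ to give a \emph{contraction} semigroup, without re-checking the column-sum condition after conjugation by the weight. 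You take $w_n=n^{-p}$, $p>3$, and do re-check it, obtaining only a quasi-contraction $\|S(t)\|\le e^{\omega t}$ --- which costs the factor $e^{\omega T}$ in $K$ but is all the estimate requires. This extra verification is not pedantry: under the weight $r^n$ the entries of column $m$ are multiplied by $r^{1-j}$, which amplifies the non-negative coefficients $q_2,\dots,q_l$ while shrinking $q_0$, and when $q_0+\cdots+q_l=0$ the weighted column sums can grow like a positive multiple of $(1-r)m$, so the paper's contraction claim is not automatic; your polynomial weight avoids this because it perturbs each column sum only by an $\mathcal{O}(1)$ amount, controlled (in sign, and in any case in size) by the second hypothesis of Lemma \ref{thm:generator}. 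Your computation of $S_n$ and the summability threshold $p>3$ are correct. Both arguments share the same residual gap, which you at least name explicitly: identifying the concrete moment sequence $y(\cdot)$ and the solution $f(\cdot)$ of \eqref{eq:fn} with mild solutions, i.e.\ orbits of the constructed (minimal) semigroup, which is a uniqueness/honesty statement for the infinite system that neither the paper nor your sketch supplies. Your closing Gronwall/dissipativity alternative and the observation that the first coordinate recovers Theorem 1 under the sign conditions match the paper's stated intent for this section.
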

\begin{proof}
Using the formal operator $\sL$ as in Lemma \ref{thm:generator}, the system
\eqref{eq:momentum_eq1} can be written as
\begin{align}\label{eq:yt}
\dot{y}(t)&=\sL y(t)+\frac{1}{N}d(t),\notag\\
y(0)&=y_0,
\end{align}
where $d(t):=\l(d_n(t)\r)_{n\in\NN}$.  Denote by $\ell^1_w$ a weighted $\ell^1$ space of sequences
such that $d(t)\in\ell^1_w$, $t\in\RR_+$, e.g.,
$$\l\|x\r\|_w:=\sum_{n=1}^{\infty} |x_n|r^n\text{ for }x=(x_n)_{n\in\NN}\text{ with }|r|<1.$$In this case, by \eqref{eq:d_n},
$$\l\|d(t)\r\|_w\leq c\cdot \sum_{n=1}^{\infty} n(n-1)r^n=:K<\infty,\quad t\in\RR_+.$$
Applying Lemma \ref{thm:generator} on the space $\ell^1_w$, we obtain an operator $(L_w,D_w)$ --
acting formally as $\sL$ -- which is the generator of a contraction semigroup $\l(T_w(t)\r)_{t\geq
0}$ on $\ell^1_w$. By the formula of variation of parameters for \eqref{eq:yt} (see, e.g., Engel and Nagel
\cite[Corollary III.1.7]{EN:00}), we have
\begin{align}
y(t)&=T_w(t)y_0+\frac{1}{N}\cdot\int_0^tT_w(t-s)d(s)\, ds\\
&=f(t)+\frac{1}{N}\cdot\int_0^tT_w(t-s)d(s)\, ds
\end{align}
for any $y_0\in\ell^1_w.$ Thus
\begin{equation}
\l\|y(t)-f(t)\r\|_w\leq\frac{1}{N}\cdot \int_0^t\l\|T_w(t-s)\r\|_w \cdot \l\|d(t)\r\|_w\, ds\leq
\frac{1}{N}\cdot t\cdot K,
\end{equation}and this implies \eqref{eq:yt-ft}.
\end{proof}

\begin{rem}
We note that the sign conditions of Lemma \ref{thm:generator} hold in the case of globally
constrained random link activation-deletion process, but they do not hold in the case of an SIS
epidemic on a complete graph.
\end{rem}

\section*{Acknowledgments}
The project was supported by the European Union and co-financed by the European Social Fund (grant agreement no. TAMOP
4.2.1./B-09/1/KMR-2010-0003). Supported by the OTKA grant Nr. K81403. A.~B\'atkai was further supported by the Alexander von Humboldt-Stiftung. E.~Sikolya was supported by the Bolyai Grant of the Hungarian Academy of Sciences.

\end{document}